\newtheorem{theorem}{Theorem}[section]
\newtheorem{lemma}[theorem]{Lemma}
\newtheorem{proposition}[theorem]{Proposition}
\newcommand{\Z}{\mathbb{Z}} 
\newcommand{\N}{\mathbb{N}}
\newcommand{\R}{\mathbb{R}}
\newcommand{\la}{\lambda}
\author[Berk]{Przemysław Berk}
\address{Faculty of mathematics and computer science, Nicolaus Copernicus University, ul. Chopina 12/18, 87-100 Toru\'n, Poland}
\email{zimowy@mat.umk.pl}
\author[Kotlewski]{Łukasz Kotlewski}
\address{Faculty of mathematics and computer science, Nicolaus Copernicus University, ul. Chopina 12/18, 87-100 Toru\'n, Poland}
\email{kotlewskilukasz@mat.umk.pl}
\title[Recurrence properties of certain non-integrable cocycles]{Coexistence of two
contrasting recurrence properties of certain non-integrable cocycles}
\begin{document}
\maketitle

\begin{abstract}
    We study the recurrence properties of certain skew products over symmetric interval exchange transformations, including rotations, with cocycles of the form $f(x)=-\frac{1}{x^a}+\frac{1}{(1-x)^a}$, where $a>1$. We prove that typically, such systems are dissipative. However, at the same time they are \emph{topologically recurrent}, i.e. for every open rectangle $A\subset[0,1)\times \R$, there exists an infinite sequence $(q_n)_{n=1}^{\infty}$ such that $T^{q_n}_f(A)\cap A\neq\emptyset$.
\end{abstract}

\section{Introduction}

Let $T:(X,\mu)\to(X,\mu)$ be an ergodic probability-preserving transformation and let
$\varphi:X\to\mathbb R$ be a measurable function. The associated skew product is the infinite-measure preserving transformation
\[
T_\varphi(x,y)=(Tx,\,y+\varphi(x)),\qquad (x,y)\in X\times\mathbb R,
\]
which preserves $\mu\otimes \mathrm{Leb}_{\mathbb R}$.
Iterates satisfy
$T_\varphi^n(x,y)=(T^n x,\,y+S_n\varphi(x))$, where $S_n\varphi$ are the Birkhoff sums of $\varphi$ with respect to $T$.
Such extensions can be interpreted as deterministic analogues of random walks
driven by the base dynamics and have long been studied in infinite ergodic theory
and in the theory of cocycles; see, for instance, \cite{Aaronson1997,Schmidt1977,Schmidt2006}.

A central dichotomy for skew products is whether the system is \emph{recurrent or dissipative}. In the integrable setting, the picture is particularly rigid:
if $T$ is ergodic and $\varphi\in L^1(\mu)$, recurrence of the cocycle (equivalently, conservativity of
$T_\varphi$) is governed by the vanishing of the mean, and the recurrence of almost every point follows from the result of Atkinson \cite{atkinson_recurrence_1976}, see also Schmidt \cite{Schmidt2006}. On the other hand, if $\varphi$ is of non-zero mean, then by the classical Birkhoff Ergodic Theorem, almost every point $x\in X$ escapes, i.e. for every $L>0$ at most finitely many iterations of $T_{\varphi}$ on $x$ belong to the set $X\times[-L,L]$.
Beyond integrability, however, even qualitative behavior of Birkhoff sums may change
drastically: large excursions can be forced by rare visits of the base orbit to regions
where $\varphi$ has large values, and the classical $L^1$-based tools, such as essential values, are no longer available. Clearly, when a non-integrable function can be estimated from below by $L^1$ functions with positive mean or from above by functions with negative mean, then one obtains the escape of points in a similar manner as when one considers functions with zero mean. Hence, to get new interesting examples, we need to carefully choose the cocycles with additional properties, which do not allow for this kind of exploitation.

In this work we focus on a concrete and, from the cocycle perspective, genuinely
\emph{non-integrable} regime. The base transformation is an interval exchange
transformation (IET) $T:[0,1)\to[0,1)$, i.e.\ a piecewise translation with finitely many
discontinuities. For irreducible combinatorics, typical IETs are minimal
and uniquely ergodic \cite{Masur1982,Veech1982}, and renormalization by \emph{Rauzy--Veech
induction} provides a powerful tool for Birkhoff-sum estimates. 

Skew products over rotations and IETs with \emph{singular cocycles} have attracted
substantial interest, in part because they arise as Poincar\'e maps for extensions of
area-preserving flows on surfaces. In the rotation case, Fayad and Lema\'nczyk proved
ergodicity for a full-measure set of angles for cylindrical transformations with
logarithmic singularities (under a zero-mean assumption) \cite{FayadLemanczyk2006}.
For IETs, recent progress has produced explicit ergodic examples for cocycles with
(logarithmic) singularities over typical symmetric IETs \cite{BerkTrujilloUlcigrai2025},
and more generally a method, involving Borel--Cantelli-type arguments, has been
developed to prove ergodicity for classes of antisymmetric cocycles with singularities
over symmetric IETs \cite{BerkFraczekTrujillo2024}. These advances operate in settings
where the cocycle is still (at least) integrable. To our knowledge, no  results exist in this spirit about cocycles over IETs (or even just rotations), with the cocycle being non-integrable. It is important to mention that non-integrable cocycles may as well appear when studying the Poincar\'e sections of extensions of area-preserving flows on surfaces, when the extension is being made via a non-integrable energy function.

Hence, our goal here is the following: we exhibit and analyze an \emph{escape} phenomenon
for an explicit family of \emph{non-integrable} cocycles. Namely, for $a>1$ we consider
\[
f(x)=-\frac{1}{x^a}+\frac{1}{(1-x)^a},
\]
which has power singularities at $0$ and $1$ and since $a>1$, we have $f\notin L^1([0,1))$.
We study the skew product $T_f$ over \emph{symmetric} IETs (whose permutation reverses
the order of exchanged intervals), a class which has an additional antisymmetry
that interacts naturally with the involution $\mathcal I(x)=1-x$ (see Sec. \ref{sec: iets} for details). 
Our main result is as follows.
\begin{theorem}\label{thm: main}
    Let $f:[0,1)\to \R$ be given by $f(x)=-\frac{1}{x^{a}}+\frac{1}{(1-x)^{a}}$, with $a>1$. For almost every symmetric interval exchange transformation $T$ on $[0,1)$, the skew product $T_{f}$ on $[0,1)\times\R$ given by
    \[
    T_{f}(x,y)=(T(x),y+f(x))
    \]
    has the following properties:
    \begin{itemize}
        \item[A)] for a.e. point $(x,y)\in [0,1)\times \R$, and every $L>0$ there exists $N=N(x,y,L)$ such that for every $n\ge N$ it holds that $T_{f}^n(x,y)\notin[0,1)\times [-L,L]$;
        \item[B)] for every open rectangle $D:=I_1\times I_2\subset [0,1)\times \R$, there exists an infinite increasing sequence $(q_n)_{n=1}^{\infty}$ such that $T^{q_n}_{f}(D)\cap D\neq\emptyset$.
    \end{itemize}
    In particular, by Property \textup{A}, $T_f$ is not ergodic with respect to $Leb_{[0,1)}\otimes Leb_{\R}$.\end{theorem}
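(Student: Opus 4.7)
The two properties will be proved by essentially opposite mechanisms, both exploiting the antisymmetry $f\circ\I=-f$ together with the symmetric combinatorics of $T$. Property A concerns the divergence of the Birkhoff sums along almost every orbit, while Property B exploits cancellations along carefully chosen return times produced by the palindromic structure of the Rauzy--Veech renormalization on the symmetric stratum.

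For Property A, the heuristic is that since $T$ is typically uniquely ergodic, the closest approach to $\{0,1\}$ along a generic orbit $x,Tx,\dots,T^{n-1}x$ has order $1/n$; because $a>1$, the single corresponding term of $S_nf$ has size of order $n^a$, and therefore dominates the whole Birkhoff sum. My plan is: (i) use Rauzy--Veech induction to reduce the global sum $S_nf(x)$ to controlled ``special Birkhoff sums'' over Rokhlin towers, each of which is pinned to the orbit's distance to $\{0,1\}$; (ii) run a Borel--Cantelli argument along the renormalization times to show that for a.e.\ $x$ the record near-approach to $\{0,1\}$ in $[0,n]$ yields a dominant contribution of order $n^{\alpha}$ with $\alpha>1$, and that this record flips sign only finitely often at comparable scales; (iii) conclude that $|S_nf(x)|\to\infty$ almost surely, which is exactly Property A. I expect the main obstacle to be step (ii): because of antisymmetry, a very close approach to $0$ could in principle be compensated by a comparably close approach to $1$, bringing $S_nf$ back into $[-L,L]$ infinitely often. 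Ruling this out demands a quantitative non-resonance/decorrelation estimate for the joint visits of the orbit to the two singular endpoints, which for a typical symmetric IET should come from Diophantine-type bounds on the Zorich/Rauzy cocycle.

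For Property B the strategy is opposite. For a.e.\ symmetric IET, the Rauzy--Veech induction passes through infinitely many palindromic (symmetry-preserving) steps, producing a sequence of symmetric subintervals $J_n\ni\tfrac12$ and return times $q_n$ such that $T^{q_n}\colon J_n\to J_n$ and the $\I$-symmetry pairs the tower level $k$ with the level $q_n-1-k$; antisymmetry of $f$ then forces the special Birkhoff sum to vanish identically,
\[
S_{q_n}f(x)=0\qquad\text{for every } x\in J_n.
\]
Given an arbitrary open rectangle $D=I_1\times I_2$, I deduce topological recurrence as follows. By minimality of $T$, there exists $m\in\N$ with $T^m(\tfrac12)\in I_1$; for all $n$ large enough $T^m(J_n)\subset I_1$ and none of the iterates $T^iy$, $0\le i<m$, $y\in J_n$, comes within a fixed positive distance of $\{0,1\}$, so that $S_mf$ is uniformly continuous on $J_n$. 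Picking any $y\in J_n$ and setting $x:=T^my\in I_1$, we get $T^{q_n}x=T^m(T^{q_n}y)\in T^m(J_n)\subset I_1$, while the cocycle identity together with $S_{q_n}f(y)=0$ yields
\[
S_{q_n}f(x)=S_mf(T^{q_n}y)-S_mf(y),
\]
which tends to $0$ as $|J_n|\to 0$. Choosing $y'$ near the midpoint of $I_2$ then gives $T_f^{q_n}(x,y')\in D$, and letting $n\to\infty$ produces the required strictly increasing sequence. The principal obstacle for Property B is establishing the existence of infinitely many palindromic Rauzy--Veech steps for a.e.\ symmetric IET; this is an ergodicity/recurrence statement for the symmetric Rauzy--Veech renormalization, in the spirit of \cite{BerkFraczekTrujillo2024,BerkTrujilloUlcigrai2025}.
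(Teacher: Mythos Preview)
Your plan for Property~A has the gap you yourself flag, and it is a real one: controlling the \emph{sign} of the dominant term of $S_nf$ and excluding infinitely many near-cancellations between close visits to $0$ and to $1$ would require a delicate decorrelation estimate that you do not supply. The paper sidesteps this difficulty completely by working with the \emph{derivative} $f'(x)=\tfrac{a}{x^{1+a}}+\tfrac{a}{(1-x)^{1+a}}$, which is everywhere positive, so there is no cancellation to worry about. From the equidistribution properties of the Rokhlin towers one gets a uniform lower bound $S_r(f')(x)\ge c\,r\,q_n^{a}$ for $2q_n\le r<2q_{n+1}$. Since $S_rf$ is strictly increasing on each of its $\le (d-1)r+1$ continuity intervals, the set $\{|S_rf|\le L\}$ consists of at most $(d-1)r+1$ intervals each of length $\le 2L/(c\,r\,q_n^{a})$, hence has measure $\lesssim q_n^{-a}$; summing over $r$ and using subexponential growth of $q_n$ gives $\sum_r Leb(A_r)<\infty$, and Borel--Cantelli finishes. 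This derivative trick is the key idea your proposal is missing.

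For Property~B, your central claim that $S_{q_n}f(x)=0$ for \emph{every} $x\in J_n$ is incorrect. The relation $T^{-1}\circ\I=\I\circ T$ gives $\I(T^kx)=T^{-k}(\I x)$, so the orbit segment $\{T^kx:0\le k<q_n\}$ is $\I$-symmetric only for the specific $x$ satisfying $\I x=T^{q_n-1}x$, not on an interval; the palindromic pairing of tower \emph{levels} does not pair orbit \emph{points} unless $x$ is this special point. The paper instead produces a single point $z_n$ (a midpoint of one tower level) with $S_{2h_n}f(T^{-h_n}z_n)\to 0$, and then---rather than using a cocycle identity with a fixed shift $m$---exploits the monotonicity of $S_{2h_n}f$ on continuity intervals: shifting $z_n$ forward along the tower decreases the Birkhoff sum, shifting backward increases it, so by the intermediate value theorem one finds $w_n$ in the prescribed interval $J$ with $S_{2h_n}f(w_n)=S_{2h_n}f(T^{-h_n}z_n)\to 0$ and $T^{2h_n}w_n\in J$. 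Your transport-by-fixed-$m$ argument could be repaired if you replace the false identity by the correct single-point statement, but you would then need to handle the fact that the base point varies with $n$.
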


Loosely speaking, we show that in the considered family of skew products, our systems are indeed typically dissipative, that is almost every point in $[0,1)\times\R$, with respect to $Leb_{[0,1)}\otimes Leb_{\R}$ escapes.
At the same time, and in sharp contrast with the measure-theoretic escape,
we prove a complementary form of \emph{topological return}: every open rectangle
$D\subset [0,1)\times\mathbb R$ intersects infinitely many of its images along a subsequence
of iterates. This shows the possibility of coexistence of two contrasting properties, when dealing with non-integrable cocycles, a phenomenon not observed in the integrable framework in the case of minimal bases of skew products and continuous cocycles. To our knowledge, it is the only known result of this kind. However, from a different point of view, it is worth mentioning that the limit behavior of Birkhoff sums of cocycles studied in this article, in case of rotations, were studied extensively by Sinai and Ulcigrai in \cite{SinaiUlcigrai2008LimitTheorem} and more recently by Auer and Schindler in \cite{AuerSchindler2025TrimmedPower}.

We note here that assuming minimality of $T$ and at least piecewise continuity of $\varphi$ is a natural assumption when combining measure-theoretic and topological regime. Otherwise, the interplay between topological and measure-theoretic phenomena can trivialize. For example, if $X=\{0,1\}^\Z$, $\mu\sim \operatorname{Ber}(p)$ with $p\in(0,1)\setminus\{1/2\}$ and $\varphi=\chi_{[0]}-\chi_{[1]}$, where $[0]$ and $[1]$ are cylinders corresponding to fixed $0$-th coordinate, then in every open set in $X$ one can find a periodic sequence which has the same number of 0's and 1's and thus the topological recurrence for the shift holds easily, while $\int \varphi\ d\mu\neq 0$ and thus, the system is measure-theoretically dissipative. 

We emphasize that Theorem \ref{thm: main} does not hold for the function $f=-\frac{1}{x}+\frac{1}{1-x}$. More precisely, the property A cannot be proven the same way we are proving it for the considered class of functions. The reason is that the key Borel-Cantelli-like argument does not work. Interestingly, the property B can be shown for $f=-\frac{1}{x}+\frac{1}{1-x}$ and the proof is identical to ours. This poses a question, whether skew products given by this cocycle are recurrent and if they are, whether they are ergodic.

\smallskip
\noindent\textbf{Outline.}
Section~2 recalls the IET framework and fixes Diophantine conditions required to obtain the main results of this article. In Section~3 we derive lower bounds for Birkhoff sums of the
derivative of $f$, which yield quantitative control on the size of level sets where
$S_n f$ is small. Section~4 applies a Borel--Cantelli argument to deduce the almost-sure
escape from bounded vertical strips. Finally, Section~5 exploits the symmetry of the base
and the antisymmetry of $f$ to produce the subsequence of iterates yielding intersections
of rectangles.

    \section{Interval exchange transformations and the necessary Diophantine conditions}\label{sec: iets}
    Let us recall that an \emph{interval exchange transformation} (or \emph{IET} for short) $T$ on $[0,1)$ is any automorphism of $[0,1)$ such that there exists a partition of $[0,1)$ into finitely many intervals $I_{\alpha}$, for which $T|_{I_{\alpha}}$ is a translation. Such transformation clearly preserves the Lebesgue measure $Leb_{[0,1)}$ on $[0,1)$. 

    Every IET $T$ is fully determined by a couple of parameters:
    \begin{itemize}
        \item a permutation $\pi\in S(d)$ of the set $\{1,\ldots,d\}$, which governs the way the intervals are being exchanged;
        \item the length vector $\lambda=(\lambda_{p})_{1\le p\le d}\in\Lambda^{d}$, such that $|I_{p}|=\lambda_{p}$\footnote{Here $\Lambda^{d}$ denotes the standard unit simplex in $\R^{d}$}.
    \end{itemize}
    Hence the space of interval exchange transformations can be fully parametrized by the set $S^d\times \Lambda^{d}$. Whenever we say that a property holds for almost every IET, we mean that it holds for $Leb$-a.e parameter $\lambda\in\Lambda^d$. We also denote by $\partial I_p$, the left-hand side endpoint of the interval $I_p$ for every $1\le p\le d$.

      Throughout this paper, we will assume that $T$ satisfies the Keane condition, that is
      $T^j(\partial I_p)=\partial I_{p'}$ for some $j\in\Z$ and $1\le p,p'\le d$ implies $j=1$ and $p'=1$. Moreover, we are interested in IETs whose associated combinatorial data $\pi$ is a symmetric permutation, that is
 \[
     \pi(p)=d+1-p\quad\text{for every }1\le p \le d.
     \]
     It is worth mentioning that the importance of this permutation comes from the fact that if $\mathcal I:[0,1)\to [0,1)$ is given by $\mathcal I(x)=1-x$ (defined everywhere except $0$), then $T^{-1}\circ \mathcal I=\mathcal I\circ T$ (which holds always outside the orbits of discontinuities of $T$). The Lemma \ref{lem:zeroes}, used in the proof of Property B in \ref{thm: main} is a consequence of this property.

For a typical interval exchange transformation we have the following proposition, whose statement contains also the notation used in the proofs of main results.
\begin{proposition}\label{prop: properties}
    For almost every symmetric IET $T$ of $d\ge 2$ intervals on $[0,1)$, $T$ satisfies the Keane condition and there exists an infinite sequence of intervals $I^{(n)}=[0,|I^{(n)}|)$ with $\lim_{n\to\infty}|I^{(n)}|=0$ such that: 
    \begin{itemize}
        \item[1)] the first return map $T^{(n)}$ to $I^{(n)}$ is a symmetric IET of $d$ intervals denoted $(I^{(n)}_{p})_{1\le p\le d}$ with the length vector $\lambda^{(n)}=(\lambda^{(n)}_{p})_{1\le p\le d}$;
        \item[2)] there exists a constant $0<\epsilon_2<10^{-6}$ such that 
        \[
        \lambda_d>(1-\epsilon_2)|\lambda|,\ \text{where}\ |\la|:=\sum_{1\le p\le d}\la_p;
        \]
        \item[3)] for every $1\le p\le d$, the set  $\Xi_{p}^{(n)}:=
        \bigsqcup_{i=0}^{h^{(n)}_{p}-1} T^{i}I^{(n)}_{p}$ is a Rokhlin tower of intervals, where $h^{(n)}_{p}$ is the first return time of points from $I^{(n)}_{p}$ to $I^{(n)}$;
        \item[4)] for every $1\le p\le d$ and $0\le i\le h^{(n)}_{p}-1$, $T$ acts via translation on the interval $T^i(I^{(n)}_{p})$;
        \item[5)] there exists a constant $C_5>0$ such that 
        \[
        \max_{1\le p,p'\le d}\frac{h^{(n)}_{p}}{h^{(n)}_{p'}}\le C_5,
        \]
        \item[6)] there exists a constant $C_6>0$ such that if $q_n:=\max_{1\le p\le d}h^{(n)}_{p}$, then
        \[
        \lim_{n\to\infty}\frac{\log q_{n}}{n}=C_6;
        \]
         \item[7)] there exists a constant $C_7>0$ such that every orbit $(T^{i}x)_{i=0}^{h^{(n)}_{p}-1}$, where $1\le p\le d$ and $x\in I^{(n)}_p$,  divides $[0,1)$ into intervals of length at most $\frac{C_7}{q_n}$.
    \end{itemize}
\end{proposition}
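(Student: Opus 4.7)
The plan is to deduce this statement from the Rauzy--Veech renormalization theory for IETs, combined with an ergodic-recurrence argument which forces the asymmetric length condition in item 2). The Keane condition holds for almost every parameter $\la\in\Lambda^d$ by Keane's classical theorem, so I would assume it throughout.

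I would construct the sequence $(I^{(n)})_{n\ge1}$ as the sequence of left-anchored inducing subintervals coming from the Zorich acceleration of Rauzy--Veech induction, restricted to the subsequence of return times at which the induced combinatorics coincides with the symmetric permutation $\pi_{\mathrm{sym}}$. By the Masur--Veech theorem \cite{Masur1982,Veech1982}, Zorich acceleration is ergodic on the symmetric Rauzy class with respect to an absolutely continuous probability measure of full support on $\Lambda^d$, so such returns occur with positive density for $Leb$-a.e.\ $\la$, giving an infinite sequence with $|I^{(n)}|\to 0$ and item~1). Items~3) and 4) follow directly from the construction of the first-return partition $\Xi_p^{(n)}=\bigsqcup_{0\le i<h_p^{(n)}} T^i I_p^{(n)}$: the Keane condition prevents discontinuities of $T^i$ from entering the interior of any $I_p^{(n)}$ for $i<h_p^{(n)}$, so $T$ acts by a single translation on each floor.

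For items~5)--7), I would further thin the subsequence to symmetric return times at which the accumulated Rauzy--Veech matrix between consecutive such times is strictly positive with bounded ratios of entries; this is Veech's strong-positivity property and is generic under the Zorich invariant measure. Since the height vector is obtained by applying the transpose of this matrix, its entry-ratio bound yields the comparability $h_p^{(n)}/h_{p'}^{(n)}\le C_5$ of item~5). The exponential growth rate of $q_n$ in item~6) is then a consequence of Oseledets' theorem applied to the Zorich cocycle, whose top Lyapunov exponent is strictly positive by Veech; passing to our sparser subsequence preserves existence of the limit because the density of the chosen return times along the full Rauzy--Veech sequence converges (Birkhoff) to a positive constant $\rho$, so $\log q_n/n$ equals $(\log q^{\mathrm{RV}}_{m(n)}/m(n))\cdot(m(n)/n)\to C_6=\lambda_{\max}/\rho$. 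Item~7) is a consequence of items~4)--5): since the floors tile $[0,1)$ and $\sum_p h^{(n)}_p\la^{(n)}_p=1$, the comparability of heights gives a uniform upper bound $C_7/q_n$ on the lengths of the complementary intervals cut out by the orbit $(T^ix)_{0\le i<h_p^{(n)}}$.

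The main obstacle is item~2), which prescribes an extreme shape of the renormalized simplex point. The set
\[
U_{\epsilon_2}:=\{\la\in\Lambda^d:\la_d>(1-\epsilon_2)|\la|\}
\]
is a non-empty open subset of $\Lambda^d$, hence has strictly positive mass for the Masur--Veech invariant measure for every $\epsilon_2>0$, in particular for $\epsilon_2<10^{-6}$. By ergodicity of Zorich acceleration, the orbit of a.e.\ starting $\la$ enters $U_{\epsilon_2}$ infinitely often. Intersecting this event with the previously defined subsequence of symmetric return times satisfying strong positivity, and invoking Birkhoff's theorem for the joint event (which has positive frequency as an intersection of three events of positive measure), one obtains an infinite subsequence of symmetric return times simultaneously fulfilling items~1)--7). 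The technical care here lies in verifying that the joint event is indeed generic and that the subsequence extraction preserves the Oseledets asymptotic of item~6); both aspects reduce to standard applications of the ergodic theorem for Zorich renormalization.
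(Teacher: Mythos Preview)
Your sketch is correct and follows exactly the route the paper gestures at: the authors omit the proof entirely, saying only that it follows from the ergodicity and integrability of the Kontsevich--Zorich cocycle and referring the reader to Proposition~5.1 of \cite{BerkFraczekTrujillo2024} for a detailed analogue. One small caution on phrasing: ``intersection of three events of positive measure'' is not by itself a valid justification (such intersections can be null)---what actually works here is that your conditions cut out a nonempty open subset of the parameter space, hence carry positive Masur--Veech mass; likewise, the derivation of item~7) requires the length balance (not only the height balance of item~5)) to control the spatial gaps, but this too follows from the strong positivity you already invoked.
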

The proof of the above proposition follows from very classical properties of the so-called \emph{Kontsevich-Zorich cocycle} on the moduli space of translation surfaces, namely the fact that it is ergodic with respect to a measure equivalent to the Lebesgue measure and the fact that it is integrable. Such techniques have been used a number of times throughout many articles on the subject, thus we omit the proof. However, we encourage the reader to compare the Proposition \ref{prop: properties} with the Proposition 5.1 in \cite{BerkFraczekTrujillo2024}, since it has a very similar statement and a detailed proof.

It is worth mentioning that the Proposition \ref{prop: properties} holds also for typical rotations. Then the sequence $q_n$ is simply the sequence of partial denominators and the Proposition \ref{prop: properties} follows easily from the ergodic properties of the classical Gauss map. Since rotations are just IETs of two intervals, our result in particular holds for a typical rotation as well.

\section{Lower bounds for the derivative.} We now show the required bounds for the Birkhoff sums of the derivative of function $f$. They will be useful in proving Property A from Theorem \ref{thm: main}. Thus, let $f=-\frac{1}{x^{a}}+\frac{1}{(1-x)^{a}}$ for some $a>1$. 

Fix an IET $T=(\pi,\lambda)$ which satisfies the conclusion of Proposition \ref{prop: properties} and let $(q_n)_{n\in\N}$ defined in the point 6). For every function $F:[0,1)\to\R$ and $n\in\Z$, we denote by $S_n(F)(\cdot)$ the $n$-th Birkhoff sum, i.e.
\[
S_n(F)(x):=\begin{cases}
    \sum_{k=0}^{n-1}F(T^k x)&\text{ if }n>0;\\
    0&\text{ if }n=0;\\
    -\sum_{k=n}^{-1}F(T^k x)&\text{ if }n<0.
\end{cases}
\]
We have the following result.
\begin{lemma}\label{lem:lowerbounds}
Let $r\in \N$ and let $n$ be such that $2q_n\le r<2q_{n+1}$. Then there exists a constant $c>0$, such that 
\[
S_{r}(f')(x)\ge cr q_n^{a},
\]
for every $x\in\mathbb [0,1)$ not in the orbit of singularity.
\end{lemma}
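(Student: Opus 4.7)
The plan hinges on two observations. First, $f'(x)=a/x^{a+1}+a/(1-x)^{a+1}$ is pointwise non-negative on $(0,1)$, so in any Birkhoff sum of $f'$ we may freely discard whichever terms we cannot control. Second, item~7) of Proposition~\ref{prop: properties} says that the orbit of any point of $I_p^{(n)}$, taken for exactly $h_p^{(n)}$ steps, partitions $[0,1)$ into subintervals of length at most $C_7/q_n$; in particular at least one of those iterates falls in $[0,C_7/q_n)$, where one has the elementary pointwise bound $f'(y)\ge (a/C_7^{a+1})\,q_n^{a+1}$. Consequently every full climb through a Rokhlin tower $\Xi_p^{(n)}$ contributes at least $c_0\,q_n^{a+1}$ to $S_r(f')(x)$, and it only remains to count how many such complete climbs are contained in the initial $r$ iterates of $x$.

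To do so, I would introduce the successive hitting times $0\le k_0<k_1<\cdots$ of the forward orbit of $x$ at the base $I^{(n)}$. Items~5)--6) together with the definition of $q_n$ force the comparability $q_n/C_5\le h_p^{(n)}\le q_n$, hence $k_{j+1}-k_j\in[q_n/C_5,q_n]$, and since the towers cover $[0,1)$ we also have $k_0\le q_n$. Under the assumption $r\ge 2q_n$ this produces at least $\lfloor(r-q_n)/q_n\rfloor\ge r/(2q_n)$ complete returns inside the window $[0,r)$. Each such return corresponds, by construction, to a full orbit piece $(T^i y)_{i=0}^{h_{p_j}^{(n)}-1}$ with $y\in I_{p_j}^{(n)}$, so item~7) applies and yields the one-term contribution from the previous paragraph. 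Summing this single-term bound over the $\ge r/(2q_n)$ complete climbs, and dropping all remaining (non-negative) terms, gives $S_r(f')(x)\ge (a/(2C_7^{a+1}))\,r\,q_n^{a}$, which is the claim with $c=a/(2C_7^{a+1})$.

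The only delicate point — and the main potential obstacle, were $f'$ not sign-definite — is the bookkeeping of the two partial tower fragments at the very start and end of $[0,r)$, for which item~7) gives no useful estimate. Non-negativity of $f'$ neutralises this entirely: we simply throw those fragments away. This is precisely why the lemma is phrased for $f'$ rather than $f$ itself, and is exactly the feature that will break down in the Borel--Cantelli step of Section~4, where cancellation between the two singular terms of the original (signed) function $f$ must be handled by a more careful argument.
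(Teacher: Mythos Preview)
Your argument is correct and essentially identical to the paper's: both use positivity of $f'$, extract a single closest-approach contribution $\gtrsim q_n^{a+1}$ from each full tower climb via item~7), and then count $\gtrsim r/q_n$ such climbs in $[0,r)$, discarding the two partial fragments. The only slip is the inequality $\lfloor(r-q_n)/q_n\rfloor\ge r/(2q_n)$, which fails for $r$ just above $2q_n$ (e.g.\ $r=2q_n+1$); replacing the $2$ by $3$ restores it for all $r\ge 2q_n$ and only changes your final constant $c$.
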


\begin{proof}
Throughout the proof we use the fact that $f'$ is a positive function. We first start by proving that for any $n\in\N$, $p\in\{1,\ldots,d\}$ and $x\in I^{(n)}_{p}$ we have that 
it holds that
\begin{equation}\label{eq: speciallowerbounds}
    S_{h^{(n)}_{p}}(f')(x)\ge c'h^{(n)}_{p}q_{n}^a
\end{equation}
for some constant $c'>0$.

Consider first the function $g(x)=\frac{1}{x^{1+a}}$. In view of 7) in Proposition \ref{prop: properties}, for every $x\in\mathbb [0,1)$ we have
\begin{equation*}
S_{h^{(n)}_{p}}(g)(x)=\sum_{j=0}^{h^{(n)}_{p}-1}g(T^j(x))\ge
\sum_{j=1}^{h^{(n)}_{p}}\left(\frac{2jC_7}{q_n}\right)^{-1-a}=({q_n})^{1+a}\sum_{j=1}^{h^{(n)}_{p}}\left({2jC_7}\right)^{-1-a}\ge \tilde ch^{(n)}_{p}q_n^{a},
\end{equation*}
where $\tilde c:=(2^{1+a}C_7^{1+a})^{-1}$. 
Analogously, we prove that if $h(x)=\frac{1}{(1-x)^{1+a}}$, then
\[
S_{h^{(n)}_{p}}(h)(x)\ge \tilde ch^{(n)}_{p}q_n^{a}.
\]
Since $f'=\frac{1}{a}(g+h)$, this shows \eqref{eq: speciallowerbounds} for $c'=2\tilde ca^{-1}$.

Assume now that $x\in[0,1)$, $r\ge 2q_1$ and let $n$ be such that $2q_n\le r<2q_{n+1}$. For any $1\le p\le d$, let $b_{p}$ be the number of times the orbit $\mathcal O(x,r):=\{T^jx\mid j=0,\ldots, r-1\}$ passes through the entire tower $\Xi_{p}^{(n)}$. Since $r\ge 2q_n$, and $q_n$ is the maximal height of towers of index $n$, at least for one index $1\le p \le d$, the number $b_{p}$ is positive. In particular
\[
\sum_{1\le p\le d}b_{p}h^{(n)}_{p}\le r <2q_n+\sum_{1\le p\le d}b_{p}h^{(n)}_{p}\le 3C_5\sum_{1\le p\le d}b_{p}h^{(n)}_{p},
\]
where the last inequality follows from 5) in Proposition \ref{prop: properties}.
Then by \eqref{eq: speciallowerbounds}, for $c:=(3C_5)^{-1}c'$ we get
\[
S_r(f')(x)\ge  \sum_{1\le p\le d}b_{p}c'h^{(n)}_{p} q_{n}^{a}\ge crq_n^a,
\]
This finishes the proof of Lemma \ref{lem:lowerbounds}.\end{proof}

\section{Escaping points.}
In this section we show the Property A of Theorem \ref{thm: main}. The idea to prove this part is to use the First Borel-Cantelli Lemma to show that points in general do not come back.

\begin{proof}[Proof of Property A in Theorem \ref{thm: main}.] {Let $T$ be a symmetric IET which satisfies conclusion of Proposition \ref{prop: properties}}. Let $r\ge 2q_1$ and $n\in\N$ such that $2q_n\le r<2q_{n+1}$. Note that, by Keane condition, the set $\bigcup_{1\le p\le d}\mathcal O(T^{-r}\partial I_{p},r)$ divides $[0,1)$ into $(d-1) r+1$ intervals of continuity of $S_r(f)$. Moreover, $f$ is strictly increasing, hence $S_r(f)$ has at most $(d -1)r+1$ zeroes. 

Fix $L>0$ and denote 
\[
A_r:=\{x\in [0,1) \mid |S_r(f)(x)|\le L\}.
\]
Let $z$ be one of the zeroes of $S_r(f)$. In view of Lemma \ref{lem:lowerbounds}, the interval $I_z$ which contains $z$, such that for every $x\in I_z$ we have $|S_r(f)(x)|\leq L$, has length $|I_z|\le \frac{2L}{crq_n^{a}}$. Since there are at most $(d -1)r+1$ zeroes of $S_r(f)$, we get that
\begin{equation}\label{eq:measofrecurrence}
Leb(A_r)\le \left((d-1) r+1\right)\cdot \frac{2L}{crq_n^{a}}\le \frac{4d L}{cq_n^{a}},
\end{equation}
with constant $c>0$ independent of $r$.

To finish the proof of Property A, it is enough to show that a.e. point $(x,y)$ visits the set $[0,1)\times[-L,L]$ via $T_f$ at most finitely many times. By Borel-Cantelli Lemma, it is enough to show that 
\begin{equation}\label{eq:convergecetoshow}
\sum_{r=1}^\infty Leb(A_r)<\infty.
\end{equation}

Note that by 6) in Proposition \ref{prop: properties}, for any given $\epsilon>0$ and $n$ large enough, we have for some $D>0$ that 
\begin{equation}\label{eq:q_n+1_vs_q_n}
    q_{n+1}\le e^{(C_6+\epsilon)(n+1)}=e^{C_6+\epsilon}
    \cdot (e^{(C_6-\epsilon)n})^{\frac{C_6+\epsilon}{C_6-\epsilon}}\le Dq_n^{1+\frac{(a-1)}{2}},
\end{equation}
which, by enlarging $D$ if necessary, may be assumed to hold for every $n\in\N$.
Thus, in view of \eqref{eq:measofrecurrence} and \eqref{eq:q_n+1_vs_q_n}, we get that 
\[
\begin{split}
&\sum_{r=2q_1}^\infty Leb(A_r)=\sum_{n=1}^{\infty}\sum_{r=2q_n}^{2q_{n+1}-1}Leb(A_r)\\
&\le \sum_{n=1}^{\infty}\sum_{r=0}^{2q_{n+1}-1}Leb(A_r)\le \sum_{n=1}^{\infty} 2q_{n+1}\cdot\frac{4d L}{cq_n^{a}}\le \frac{8d D L}{c}\sum_{n=1}^{\infty}{\frac{1}{q_n^{{(a-1)/2}}}}<\infty,
\end{split}
\]
where the last inequality follows again from 6) in Proposition \ref{prop: properties}. This shows \eqref{eq:convergecetoshow} and thus finishes the proof of Property A in Theorem \ref{thm: main}.
    
\end{proof}

\section{Intersecting rectangles}
In this section we show the property B of Theorem \ref{thm: main}. We use the antisymmetry property used by the first author in articles with Frączek, Trujillo, Ulcigrai and Wu (see \cite{BerkTrujilloUlcigrai2025}, as well as \cite{berk_ergodic_2024}, \cite{berk_ergodicity_2024} and \cite{BerkFraczekTrujillo2024}). Namely, we have the following lemma.
\begin{lemma}\label{lem:zeroes}
    Let $T$ be a symmetric IET satisfying the conclusion of Proposition \ref{prop: properties}. Moreover, let $f=-\frac{1}{x^a}+\frac{1}{(1-x)^a}$ {with $a>0$}. Then for every $n\in\N$, if $h_n:=h_d^{(n)}$, there exists a point $z_n\in\Xi_d^{(n)}$ such that 
    \begin{equation}\label{eq: zero}
    \lim_{n\to\infty} S_{2h_n}(f)(T^{-h_n}z_n)=0
    \end{equation}
    and
    \begin{equation}\label{eq: orbitfar}
    \textup{dist}\left(\mathcal O(T^{-h_n}z_n,2h_n),\{\partial I_p\mid 1\le p\le d\}\cup\{1\}\right)\ge \frac{1}{10}\la_d^{(n)}.
    \end{equation}
    Additionally, for every $n\in\N$, the point $z_n$ is a midpoint of the interval $T^{j_0}I^{(n)}_{d}$ for some $j_0\in\{0,\ldots,h_n-1\}$ and 
    \begin{equation}\label{eq: thesamelevel}
    |T^{-h_n}z_n-z_n|=|T^{h_n}z_n-z_n|\le \frac{1}{10}\la_d^{(n)}
    \end{equation}
\end{lemma}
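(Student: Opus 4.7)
The plan is to construct $z_n$ as the midpoint $z_n=T^{j_0}m_n$ of a level of $\Xi_d^{(n)}$, where $m_n:=|I^{(n)}|-\la_d^{(n)}/2$ is the midpoint of the base $I_d^{(n)}$ and $j_0\in\{0,\ldots,h_n-1\}$ will be specified. The three ingredients I would combine are the symmetric-IET conjugacy $\mathcal I\circ T=T^{-1}\circ \mathcal I$ (valid off the orbits of discontinuities), the antisymmetry $f(1-x)=-f(x)$, and the fact, forced by item 2) of Proposition \ref{prop: properties}, that $T^{h_n}$ and $T^{-h_n}$ act as mutually inverse translations on the large overlap $I_d^{(n)}\cap T^{h_n}(I_d^{(n)})=[|I^{(n)}|-\la_d^{(n)},\la_d^{(n)})$.

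First I would dispose of \eqref{eq: thesamelevel} and \eqref{eq: orbitfar}. Setting $\epsilon_n:=|I^{(n)}|-\la_d^{(n)}$, item 2) ensures that $m_n$ and $m_n\pm\epsilon_n$ all lie inside $I_d^{(n)}$, and a direct computation yields $T^{\pm h_n}m_n=m_n\mp\epsilon_n$. The tower property (item 4)) says that $T^{j_0}$ is a single translation on $I_d^{(n)}$, so the identity lifts to $T^{\pm h_n}z_n-z_n=\mp\epsilon_n$ for every $j_0$, and the bound $\epsilon_n<\epsilon_2|I^{(n)}|<\tfrac{\epsilon_2}{1-\epsilon_2}\la_d^{(n)}<\tfrac{1}{10}\la_d^{(n)}$ gives \eqref{eq: thesamelevel}. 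A decomposition of $\mathcal O(T^{-h_n}z_n,2h_n)$ across the unique wrap through $T^{h_n}$ shows that every orbit point lies within $\epsilon_n$ of some level midpoint of $\Xi_d^{(n)}$; since each level is a continuity interval of $T$ of length $\la_d^{(n)}$ whose endpoints carry all $\partial I_p$ and $1$ that appear inside the tower, \eqref{eq: orbitfar} follows.

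The core of the proof is \eqref{eq: zero}. I would pick $j_0$ so that $z_n$ is the tower midpoint closest to the $\mathcal I$-fixed point $\tfrac{1}{2}$; by item 7), this gives $|z_n-\tfrac{1}{2}|\le C_7/q_n$. The pairing $j\leftrightarrow -j-1$ in $\sum_{j=-h_n}^{h_n-1}f(T^jz_n)$ would give exact cancellation via $f(T^{-j-1}z_n)=f(\mathcal I(T^jz_n))=-f(T^jz_n)$ provided $\mathcal I(z_n)=T^{-1}z_n$, equivalently $z_n$ being a fixed point of the involution $T\mathcal I$ (i.e., the midpoint of $T(I_p)$ for some $p$ of the original IET). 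In our setting the discrepancy $\Delta_n:=\mathcal I(z_n)-T^{-1}z_n$ is of size $O(|z_n-\tfrac{1}{2}|+\epsilon_n)$, and each paired contribution becomes $f(T^{-j-1}z_n)-f(T^{-j-1}z_n+T^{-j}\Delta_n)$, so the full sum is bounded by $\sup_j|T^{-j}\Delta_n|$ times the Birkhoff sum of $|f'|$ along the orbit. Using \eqref{eq: orbitfar} to keep $T^{-j}$ a translation on a neighborhood of both $T^{-1}z_n$ and $\mathcal I(z_n)$ preserves $|T^{-j}\Delta_n|=|\Delta_n|$, and the Diophantine control of items 6)--7) then drives the product to zero.

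The principal obstacle is exactly this quantitative balance in the third step: the supremum of $|f'|$ along the orbit is polynomial in $1/\la_d^{(n)}$ by the singular nature of $f$, whereas the pairing defect $\Delta_n$ only decays polynomially in $1/q_n$. A sharper choice of $j_0$, aligning $z_n$ not merely with $\tfrac{1}{2}$ but with a nearby fixed point of $T\mathcal I$, or a more careful summation that exploits cancellation across $j$ in the Birkhoff sum of $|f'|$, is likely necessary to close the estimate; the subexponential gap between $q_n$ and $\la_d^{(n)}$ recorded in item 6) of Proposition \ref{prop: properties} is where the Diophantine typicality of the IET enters essentially.
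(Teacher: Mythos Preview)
Your handling of \eqref{eq: thesamelevel} and \eqref{eq: orbitfar} is correct and essentially matches the paper. The gap is in \eqref{eq: zero}, and you diagnose it yourself: with $z_n$ chosen merely as the tower midpoint nearest $\tfrac12$, the pairing defect $\Delta_n=\mathcal I(z_n)-T^{-1}z_n$ is of order $1/q_n$, while by Lemma~\ref{lem:lowerbounds} the Birkhoff sum of $f'$ over $2h_n$ steps is at least $c\,h_n q_n^{a}\asymp q_n^{1+a}$; the resulting error is therefore of order $q_n^{a}\to\infty$. No refinement of item~6) in Proposition~\ref{prop: properties} can compensate for this, so the ``quantitative balance'' you propose is not achievable.

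The correct fix is structural rather than quantitative, and it is precisely what the paper imports from \cite{BerkTrujilloUlcigrai2025}. For a symmetric IET the involution $\mathcal I$ maps the tower $\Xi_d^{(n)}$ to itself, sending levels to levels with orientation reversed; in particular the set of level midpoints $\{T^jm_n:0\le j<h_n\}$ is permuted by the involution $T\mathcal I$. The cited Lemma~3.14 establishes that this permutation has a fixed point: there exists $j_0$ with $T\mathcal I(T^{j_0}m_n)=T^{j_0}m_n$, i.e.\ $\mathcal I z_n=T^{-1}z_n$ \emph{exactly}. With this choice your own pairing $j\leftrightarrow -j-1$ gives $T^{-j-1}z_n=\mathcal I(T^jz_n)$ on the nose, hence $f(T^{-j-1}z_n)=-f(T^jz_n)$ for every $j$, and $S_{2h_n}(f)(T^{-h_n}z_n)$ vanishes with no error term to estimate. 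You float this possibility in your last paragraph as a ``sharper choice of $j_0$''; the point is that it is not merely sharper but the only viable one, and proving that such a $T\mathcal I$-fixed tower midpoint exists is exactly the content of the lemma the paper cites rather than reproves.
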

\begin{proof}
    The position of the point $z_n$ follows from Lemma 3.14 in \cite{BerkTrujilloUlcigrai2025} and the property \eqref{eq: zero} follows analogously to the proof of Lemma 3.13 in \cite{BerkTrujilloUlcigrai2025} (one needs to replace the functions odd with respect to the exchanged intervals by the functions which are odd with respect to the whole domain, cf. Lemma 5.2 in \cite{berk_ergodicity_2024}). Then \eqref{eq: orbitfar} and \eqref{eq: thesamelevel} follow from the form of $z_n$ and 2) in Proposition \ref{prop: properties}.
\end{proof}

\begin{proof}[Proof of Property B in Theorem \ref{thm: main}.]
It is enough to show that for every open interval $J$ in $[0,1)$, for every $\epsilon$ there exists $n$ such that there exist a point $x\in J$ satisfying $|S_{2h_n}(f)(x)|<\epsilon$ and $T^{2h_n}(x)\in J$.

Thus, we will show that for a fixed open interval $J=(a,b)\subset [0,1)$ and $n$ sufficiently large there exists a sequence $(w_n)_{n=1}^{\infty}$ of elements in $J$ such that $\lim_{n\to\infty}S_{2h_n}(f)(w_n)=0$. That being said, consider the sequence $(z_n)_{n=1}^{\infty}$ given by Lemma \ref{lem:zeroes}. Let $n$ be large enough so that there exist numbers $0\le m,m'\le h_n$ for which $T^m(z_n),T^{-m'}(z_n)\in \tilde J$, where {$\tilde J=(a+\tfrac{b-a}{4},b-\tfrac{b-a}{4})$}.  Such $n$ exists, since due to 7) (and 6)) in Proposition \ref{prop: properties}, an interval $\tilde J$ contains at least one level of tower $\Xi_d^{(n)}$, for $n$ large enough. Moreover, by \eqref{eq: thesamelevel} as well as  2) and 4) in Proposition \ref{prop: properties}, we also have that 
\begin{equation}\label{eq: stillthesamelevel}
\begin{split}
&[T^{-h_n}(T^mz_n),T^{-h_n}(T^{-m'}z_n)]\subset J,\\
T^{2h_n}&[T^{-h_n}(T^mz_n),T^{-h_n}(T^{-m'}z_n)]
=[T^{h_n}(T^mz_n),T^{h_n}(T^{-m'}z_n)]\subset J.
\end{split}
\end{equation}

We will find for every $n\in\N$ a point $w_n \in \left(T^{-h_n}(T^mz_n),T^{-h_n}(T^{-m'}z_n)\right)$, hence, by \eqref{eq: stillthesamelevel}, we already have
\begin{equation}
    T^{2h_n}(w_n)\in J,
\end{equation}
since $T$ acts on levels of $\Xi_d^{(n)}$ as an isometry.

We are left to prove that for every $n$ large enough, we can choose 
\[w_n\in \left(T^{-h_n}(T^mz_n),T^{-h_n}(T^{-m'}z_n))\right)\] 
so that
\begin{equation}\label{eq:equalBS}
    S_{2h_n}(f)(w_n)=S_{2h_n}(f)(T^{-h_n}z_n).
\end{equation} 
Then the Property B follows from \eqref{eq: zero} in Lemma \ref{lem:zeroes}.

Since $z_n$ satisfies \eqref{eq: orbitfar} in Lemma \ref{lem:zeroes}, by 2) in Proposition \ref{prop: properties}, we have that for every $j=-2h_n,\ldots,2h_n$, the interval $(T^jz_n-3\epsilon_2|I^{(n)}|,T^jz_n+3\epsilon_2|I^{(n)}|)$ is a continuity interval of $f$ (in fact, they are all subintervals of levels of $\Xi_d^{(n)}$). 
Moreover, by Lemma \ref{lem:zeroes}, we have
\[
S_{2h_n}(T^{-h_n}(T^mz_n))-S_{2h_n}(T^{-h_n}z_n)=\sum_{i=0}^{m-1}\left(f(T^{h_n+i}z_n)-f(T^{i-h_n}z_n)\right)<0
\]
and 
\[
S_{2h_n}(T^{-h_n}(T^{-m'}z_n))-S_{2h_n}(T^{-h_n}z_n)=\sum_{i=-m'}^{-1}\left(f(T^{i-h_n}z_n)-f(T^{h_n+i}z_n)\right)>0,
\]
where the inequalities follow from the fact that $f$ is increasing and for every $i=-m',\ldots,m-1$ the points $T^{i-h_n}z_n$ and $T^{h_n+i}z_n$ are in the interval of continuity $(T^iz_n-3\epsilon_2|I^{(n)}|,T^iz_n+3\epsilon_2|I^{(n)}|)$ of $f$. Hence, by the Darboux property, there exists $w_n\in\left(T^{-h_n}(T^mz_n),T^{-h_n}(T^{-m'}z_n))\right)$ such that $S_{2h_n}(f)(w_n)=S_{2h_n}(f)(T^{-h_n}z_n)$. This shows \eqref{eq:equalBS} and finishes the proof of Property B.
    
\end{proof}

\section*{Acknowledgments}
The first author was supported by the NCN Grant No. 2025/57/B/ST1/00704. The second author was supported by the NCN Grant No. 2023/50/O/ST1/00045.

\bibliographystyle{alpha}

\bibliography{Bibliography.bib}
\end{document}